\newtheorem{theorem}{Theorem}[section]
\newtheorem{lemma}[theorem]{Lemma}
\theoremstyle{definition}
\newtheorem{definition}[theorem]{Definition}
\newtheorem{remark}[theorem]{Remark}
\numberwithin{equation}{section}
\DeclareMathOperator{\Gal}{Gal}
\DeclareMathOperator{\red}{red}
\DeclareMathOperator{\Frob}{Frob}
\DeclareMathOperator{\car}{char}
\DeclareMathOperator{\lift}{lift}
\DeclareMathOperator{\tr}{tr}
\DeclareMathOperator{\Aut}{Aut}
\DeclareMathOperator{\GL}{GL}
\newcommand{\m}{\mathfrak{m}}
\newcommand{\Q}{\mathbb{Q}}
\newcommand{\C}{\mathbb{C}}
\newcommand{\F}{\mathbb{F}}
\newcommand{\Z}{\mathbb{Z}}
\newcommand{\mat}[4]{\left(\begin{array}{cc}
#1 &#2\\
#3 &#4
\end{array}\right)}
\newcommand{\GalL}{\Phi}
\newtheorem{theorem*}[]{Theorem}
\theoremstyle{definition}
\newtheorem{eg}[theorem]{Example}
\begin{document}

%%%%% To ease editing, for IMPAN journals add:

\baselineskip=17pt

%%%%%%%%%%%%%%%%

\title[Wild Galois representations]{Wild Galois representations: Elliptic curves over a $3$-adic field}

\author[N. Coppola]{Nirvana Coppola}
\address{University of Bristol\\
School of Mathematics,\
Fry Building\\
Woodland Road\\
BS8 1UG Bristol, UK}
\email{nc17051@bristol.ac.uk}

\date{}

\subjclass[2010]{Primary 11G07; Secondary 11F80}

\keywords{Elliptic curves over local fields, Galois representations, non-Abelian inertia action}

\begin{abstract}
Given an elliptic curve $E$ over a local field $K$ with residue characteristic $3$, we investigate the action of the absolute Galois group of $K$ in the case of potentially good reduction. The hardest case is when the $\ell$-adic Galois representation attached to $E$ has non-cyclic inertia image, isomorphic to $C_3 \rtimes C_4$. In this work we describe such a representation explicitly.
\end{abstract}

\maketitle

\section{Introduction}

Let $E$ be an elliptic curve defined over a field $K$, i.e. a projective smooth curve of genus $1$, with a fixed point $O$. It is given by a non-singular Weierstrass equation 
\begin{align*}
y^2 + a_1 xy + a_3 y = x^3 + a_2 x^2 + a_4 x + a_6,
\end{align*}
with coefficients $a_i \in K$. We refer to \cite[III \S 1]{Silv1} for the definition of the invariants associated to this curve, in particular we denote by $\Delta$ its discriminant. We denote by $\overline K$ a fixed algebraic closure of $K$, and let $G_K=\Gal(\overline K/K)$ be the absolute Galois group of $K$; then any $\sigma \in G_K$ acts on the points of the curve by sending the point $P$ of coordinates $(x,y)$ to $\sigma(P)=(\sigma(x),\sigma(y))$. In particular, for any integer $m$, $G_K$ acts on the group of $m$-torsion points $E[m]$ (i.e. the points of order dividing $m$). This induces, for any prime $\ell$, an action on the $\ell$-adic Tate modules, which are defined as
\begin{align*}
    T_\ell (E)= \varprojlim_n E[\ell^n].
\end{align*}

This action is called $\ell$-adic Galois representation attached to $E$, and denoted $\rho_{E,\ell}$. For $\ell$ different from the characteristic of $p$, we have that $T_\ell (E)$ is a free $\Z_\ell$-module of rank $2$; so after considering the tensor product with an algebraic closure $\overline \Q_\ell$ of $\Q_\ell$, we view $\Aut(T_\ell(E))$ as a subgroup of $\GL_2 (\overline \Q_\ell)$, and we also denote by $\rho_{E,\ell}$ the representation we obtain:
\begin{align*}
    \rho_{E,\ell} : G_K \rightarrow \GL_2(\overline \Q_\ell).
\end{align*}
Furthermore, we fix an embedding $\overline \Q_\ell \rightarrow \C$, and in what follows we identify the eigenvalues and traces of the elements of $G_K$ with the corresponding complex numbers.

Let $K$ be a local field of characteristic $0$, complete with respect to a discrete valuation $v$, with valuation ring $O_K$, maximal ideal $\m$ and perfect residue field $k$. The residue characteristic of $K$ is the characteristic of $k$, and we will always denote it by $p$. A typical example of this is $K=\Q_p$, with the $p$-adic valuation, or any algebraic extension of it. We denote by $K^{nr}$ the maximal unramified extension of $K$ contained in $\overline K$.

Recall that the inertia subgroup $I_K$ of $G_K$ is isomorphic to $ \Gal(\overline K / K^{nr})$ and, if we denote by $\overline k$ a fixed algebraic closure of $k$, we have
\begin{align*}
G_K/I_K \cong \Gal(\overline k/k).
\end{align*}

This group is procyclic, generated by the Frobenius element, i.e. the automorphism: $x \mapsto x^{|k|}$ for all $x \in \overline k$. Any preimage $\Frob$ of the Frobenius in $G_K$ under the projection to $\Gal(\overline k/k)$ is called a Frobenius element of $G_K$.

To study the $\ell$-adic Galois representation attached to an elliptic curve one has to study the Frobenius and the inertia actions and see how they are related. This representation changes substantially according to the reduction type of the curve, which can be good, multiplicative, or additive (for the definition see \cite[VII \S 5]{Silv1}); moreover in the last case, there is a finite extension of $K$ where the curve acquires either good or multiplicative reduction, so we say $E/K$ has potentially good (resp. multiplicative) reduction, as explained in Theorem \ref{silvermanVII5}. Let $p$ and $\ell$ be two distinct primes, and suppose that the field $K$ over which $E$ is defined is a $p$-adic field, i.e. isomorphic to a finite extension of $\Q_p$. Then the $\ell$-adic Galois representation attached to $E$ can easily be studied in the case of good and potentially multiplicative reduction, respectively via the Criterion of N\'eron-Ogg-Shafarevich (see \cite[VII \S 7]{Silv1} and Theorem \ref{nos}) and the theory of the Tate curve (see \cite[V \S 3]{Silv2}); the last remaining case is that of bad additive reduction with potentially good reduction. By the work of Kraus (\cite{kraus}) we have that the image of inertia is isomorphic to one of the following:
\begin{align*}
&C_2,C_3,C_4,C_6,\\
C_3 \rtimes &C_4 \text{ only when }p=3,\\
Q_8, SL_2&(\F_3)\text{ only when }p=2,
\end{align*}
where $C_n$ denotes the cyclic group of order $n$ and $Q_8$ is the quaternion group.

In this work we present a result to determine the $\ell$-adic Galois representation $\rho_{E,\ell}$ attached to an elliptic curve over a field of residue characteristic $3$, where $\ell$ is a prime different from $3$, such that the image of inertia is non-cyclic, hence isomorphic to $C_3 \rtimes C_4$. We will use the notation of \cite{Tim} for group names and presentations and for their character tables. In particular we will prove the following result.
%in Theorems \ref{mainthm1}, \ref{mainthm2} that $\rho_{E,\ell}$ factors as $\chi \otimes \psi$, where $\chi$ is the unramified character of $G_K$ (i.e. trivial on inertia) such that $\chi(\Frob)=i^n\sqrt{3^n}$, and $\psi$ is an irreducible faithful $2$-dimensional representation of a finite group $G$. We will furthermore prove that, if $n=[k:\F_3]$ is even, then $G$ is isomorphic to $C_3 \rtimes C_4$, which has a unique representation satisfying these three properties, while if $n$ is odd then $G \cong C_3 \rtimes D_4$, where $D_4$ is the dihedral group with eight elements, and we will compute the character of $\psi$ in this case.

\begin{theorem}\label{mainthm}
Let $E$ be an elliptic curve with potentially good reduction over a $3$-adic field $K$, with Weierstrass equation of the form $y^2=f(x)$ and discriminant $\Delta$. Fix a fourth root $\Delta^{1/4}$ of $\Delta$ and define $F$ to be the compositum of the splitting field of $f$ over $K$ and $K(\Delta^{1/4})$; let $F'$ be the Galois closure of $F/K$. 

Let $\chi$ be the unramified character of $G_K$ (i.e. trivial on inertia) such that
\begin{align*}
\chi(\Frob)= i^n \sqrt{3^n},
\end{align*}
and let $\psi$ be as follows. Let $n=[k:\F_3]$. If $n$ is even, let $\psi$ be the representation of $\Gal(F'/K)$, which is isomorphic to $C_3 \rtimes C_4$, with character:
\begin{align*}
\begin{array}{c|rrrrrr}
  \rm class&\rm1&\rm2&\rm3&\rm4A&\rm4B&\rm6\cr
  \rm size&1&1&2&3&3&2\cr
  \hline
  \psi&2&-2&-1&0&0&1\cr
\end{array}
\end{align*}
while if $n$ is odd then let $\psi$ be the representation of $\Gal(F'/K)$, which is isomorphic to $C_3 \rtimes D_4$, with character:
\begin{align*}
\begin{array}{c|rrrrrrrrr}
  \rm class&\rm1&\rm2A&\rm2B&\rm2C&\rm3&\rm4&\rm6A&\rm6B&\rm6C\cr
  \rm size&1&1&2&6&2&6&2&2&2\cr
\hline
  \psi&2&-2&0&0&-1&0&-i\sqrt{3}&i\sqrt{3}&1\cr
\end{array}
\end{align*}
where the presentation of $\Gal(F'/K)$ and its conjugacy classes are as in Section \ref{presentations}.

Let $\ell$ be a prime different from $3$, and let $\rho_{E,\ell}$ be the $\ell$-adic Galois representation attached to $E$. Suppose that the image of the inertia subgroup of $G_K$ is isomorphic to $C_3 \rtimes C_4$. Then $\rho=\rho_{E,\ell}$ factors as
\begin{align*}
    \rho = \chi \otimes \psi.
\end{align*}

\end{theorem}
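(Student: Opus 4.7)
My plan is to identify $F'$ as the minimal extension of $K$ over which $E$ acquires good reduction, to pin down $\rho_{E,\ell}|_{I_K}$ from the character table of $C_3 \rtimes C_4$, and then to extract the unramified twist $\chi$ from the determinant together with a Frobenius trace computation.

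First, since $E$ is given by $y^2=f(x)$, the splitting field of $f$ coincides with $K(E[2])$. The $C_4$ factor of the non-cyclic inertia image in residue characteristic $3$ is exactly what forces the adjunction of $\Delta^{1/4}$, so via the Kraus classification I would show that $F = K(E[2],\Delta^{1/4})$ is the smallest extension on which $E$ gains good reduction, with Galois closure $F'$ over $K$. Computing $\Gal(F'/K)$ splits into the two parity cases according to whether the residue field of $F$ already contains $\sqrt{-1}$, equivalently whether $n=[k:\F_3]$ is even: when $n$ is even, $F'=F$ has the same residue field as $K$ and $\Gal(F'/K)\cong C_3 \rtimes C_4$; when $n$ is odd, Frobenius acts nontrivially on $\sqrt{-1}$, producing a quadratic unramified layer and enlarging the Galois group to $C_3\rtimes D_4$. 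In both cases the inertia subgroup of $\Gal(F'/K)$ is the copy of $C_3\rtimes C_4$ carrying the faithful inertia image.

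Next, by the N\'eron--Ogg--Shafarevich criterion (Theorem \ref{nos}), $\rho_{E,\ell}|_{G_{F'}}$ is unramified, so $\rho_{E,\ell}$ factors through $G_K/I_{F'}$, and its restriction to $I_K$ is a faithful $2$-dimensional representation of $C_3\rtimes C_4$. A direct inspection of the character table of $C_3\rtimes C_4$ shows that there is exactly one such representation up to isomorphism, and its character agrees with the restriction of $\psi$ to the inertia classes in each of the two cases. Hence $\rho_{E,\ell}|_{I_K}\cong\psi|_{I_K}$, and since $\psi$ is defined on all of $\Gal(F'/K)$ by the stated characters (an existence check using the group presentations of Section \ref{presentations}), the quotient $\chi=\rho_{E,\ell}\otimes\psi^{-1}$ is a well-defined unramified character of $G_K$.

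To pin down $\chi(\Frob)$ I would use two inputs: first, $\det\rho_{E,\ell}$ is the $\ell$-adic cyclotomic character, so $\det\rho_{E,\ell}(\Frob)=|k|=3^n$, and the identity $\det(\chi\otimes\psi)(\Frob)=\chi(\Frob)^2\det\psi(\Frob)$ determines $\chi(\Frob)$ up to sign; second, the sign is fixed by computing the trace of Frobenius on $E$ via the point count of its good reduction over the residue field of $F'$ and matching this against the character value of $\psi$ on the relevant conjugacy class, yielding $\chi(\Frob)=i^n\sqrt{3^n}$. The main obstacle I foresee is the first step: rigorously proving that $F'$ has the stated Galois group (rather than some other extension of order $12$ or $24$) in both parity cases, since once this is established the remainder is straightforward character theory and a Frobenius trace match.
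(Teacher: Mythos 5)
Your even-$n$ case is essentially the paper's argument: $F/K$ is totally ramified and Galois, so Frobenius commutes with inertia in $\Gal(FK^{nr}/K)$, $\rho(\Frob)$ acts as a scalar, and the unique faithful irreducible $2$-dimensional representation of $C_3\rtimes C_4$ does the rest. The genuine gap is in the odd-$n$ case, and it is exactly the point the paper flags as the hard one. The group $C_3\rtimes D_4$ has \emph{two} faithful irreducible $2$-dimensional representations; they agree on every conjugacy class contained in the inertia subgroup $\langle\sigma,\tau\rangle\cong C_3\rtimes C_4$, they have the same determinant character, and they both take the value $0$ on the class $2B=[\phi]$ of Frobenius. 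Consequently none of the three inputs you propose --- the restriction $\rho|_{I_K}$, the identity $\det\rho=\chi^2\det\psi$, and a point count giving $\tr\rho(\Frob)$ --- can tell the two candidates apart: the two classes on which they differ, $6A=[\sigma\phi]$ and $6B=[\sigma^2\phi]$, consist of products of a wild inertia element with a Frobenius lift, and their traces are not determined by the restriction to inertia together with the trace and determinant of Frobenius.

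The paper resolves this ambiguity by an explicit computation of $\tr\rho(\sigma\Frob)$: it shows (Lemmas \ref{twofields}--\ref{minmod}) that $E$ has a minimal model over $F$ reducing to $y^2=x^3-x$, transports $\sigma\Frob$ to the reduced curve via the change of variables and the reduction and lifting maps, finds that it acts as $(\tilde x,\tilde y)\mapsto(\tilde x^{3^n}+1,\tilde y^{3^n})$, and counts fixed points (directly for $n=1$, then for general odd $n$ via the curve $y^2=x^3+9$ over $\Q_3$ base-changed to unramified extensions). Some argument of this kind --- evaluating $\rho$ on an element lying neither in $I_K$ nor in the Frobenius coset of $I_{F'}$ --- is unavoidable, and your proposal does not contain one. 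A smaller issue: writing $\chi=\rho\otimes\psi^{-1}$ presupposes that $\rho$ and $\psi$ differ by an unramified character, which needs a Schur-type argument using the irreducibility of $\psi|_{I_K}$; this is fixable, but it is in any case moot until $\psi$ itself has been pinned down.
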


%We will use the notation of \cite{Tim} for group names and character tables, and in particular we will fix a presentation for the group $G$ in the two cases and relate it to the presentation of \cite{Tim}.
Part of this result follows immediately from \cite[Theorem 1]{kraus}, \cite[Theorem 2]{ST} and the classification of the representations of the groups $C_3 \rtimes C_4$ and $C_3 \rtimes D_4$ in \cite{Tim}. The result for the case of odd $n$ is more subtle, as there are two possibilities for $\psi$, and we will prove that only one of these occurs, via explicit computation.

In Section \ref{preliminaries} we present the general setting for this problem. In particular we recall some results about the Galois representations attached to elliptic curves with good reduction and what is already known about the potentially good reduction case. In Section \ref{3adic} we tackle the case over $3$-adic fields when the action of inertia is non-cyclic. In Section \ref{presentations}, we fix the a presentation for the group $\Gal(F'/K)$. The proof is divided into two parts: in Section \ref{iink} we give the proof for the case of even $n$, and in Section \ref{inotink} we assume that $n$ is odd.

\section{Preliminaries}\label{preliminaries}

Let $E$ be an elliptic curve over a $p$-adic field $K$. In this section we give some general results about the study of the $\ell$-adic Galois representation attached to $E$, for $\ell \neq p$. As mentioned in the introduction, this representation varies according to the different reduction types that $E$ can have.  There are several results about reduction types in \cite[VII \S 5]{Silv1}, which can be summarised as follows.

\begin{theorem}\label{silvermanVII5}
\begin{itemize}
\item Let $K'/K$ be an unramified extension. Then the reduction types of $E/K$ and $E/K'$ are the same.
\item Let $K'/K$ be a finite extension. If $E/K$ has good or multiplicative reduction over $K$, then $E/K'$ has the same reduction type.
\item There exists a finite extension $K'/K$ over which $E$ has either good or multiplicative reduction.
\item The curve $E$ has potentially good reduction if and only if its $j$-invariant is integral.
\end{itemize}
\end{theorem}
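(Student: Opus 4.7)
The plan is to reduce each bullet to a statement about a minimal Weierstrass equation over $K$ (or a suitable finite extension) and its invariants $\Delta$ and $j_E$, and then track the behaviour of these invariants under the base change $K \to K'$. For the first bullet, if $K'/K$ is unramified then the normalized valuations satisfy $v'|_K = v$, so a minimal Weierstrass equation for $E/K$ remains minimal over $K'$, and its special fiber over $k'$ is obtained by base change from the one over $k$. Since smoothness, nodes and cusps are all stable and detectable under base change along a separable extension of perfect fields, the reduction type is unchanged. For the second bullet, if $E/K$ has good reduction then $v(\Delta) = 0$ so $v'(\Delta) = e \cdot v(\Delta) = 0$ for any finite extension $K'/K$ of ramification index $e$; if $E/K$ has multiplicative reduction then $v(j_E) < 0$, hence $v'(j_E) < 0$, which rules out good reduction over $K'$ and forces multiplicative reduction.

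The third bullet is the substantive part. I would split on the sign of $v(j_E)$. When $j_E$ is integral, I would pass to the short Weierstrass form $y^2 = x^3 - 27 c_4 x - 54 c_6$ and then perform the substitution $(x,y) \mapsto (u^{-2} x, u^{-3} y)$ for $u$ an appropriate fractional power of a uniformizer, chosen to absorb the $v$-contribution of $\Delta$; equivalently, one runs Tate's algorithm, which terminates in finitely many steps and shows that after a finite (tame, when $p>3$) ramified extension of degree dividing $12$ one arrives at an equation with $v'(\Delta) = 0$, hence good reduction. When $v(j_E) < 0$, I would invoke Tate's $p$-adic uniformization to produce the unique $q \in \overline{K}^{\times}$ with $v(q) > 0$ and $j(q) = j_E$; the isomorphism $E(\overline{K}) \cong \overline{K}^{\times}/q^{\Z}$ over a quadratic extension of $K(q)$ then shows that $E$ acquires split multiplicative reduction over that extension.

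The fourth bullet is then a corollary of the others. If $E/K'$ has good reduction, its smooth reduction has a well-defined $j$-invariant in $k'$, so $v'(j_E) \geq 0$, and then $v(j_E) = v'(j_E)/e \geq 0$ gives $j_E \in O_K$; the converse is exactly the first case of the third bullet. The main obstacle is the third bullet, and within it the quantitative control of the finite extension needed to reach good reduction when $j_E$ is integral, where one must either push through Tate's algorithm with its somewhat intricate case split by Kodaira type, or appeal to the semistable reduction theorem of Grothendieck and Deligne--Mumford. Since this theorem is invoked in the paper as a citation from Silverman, the expected proof is the former, more elementary route, and each of the four bullets above would be written up in a couple of lines referring to the appropriate sections of \cite{Silv1}.
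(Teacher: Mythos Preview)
The paper gives no proof of this theorem at all: it is stated purely as a summary of results from \cite[VII \S 5]{Silv1}, with no argument beyond the citation. You anticipated this correctly in your final paragraph, and your sketch is essentially the content of Silverman's own proofs in that section, so in that sense your proposal matches the paper's ``proof'' (namely: defer to Silverman).

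One small gap worth flagging in your outline: in the second bullet, for the multiplicative case you write that $v'(j_E) < 0$ ``rules out good reduction over $K'$ and forces multiplicative reduction.'' But $v'(j_E) < 0$ by itself does not exclude \emph{additive} reduction over $K'$; there are curves with non-integral $j$-invariant and additive reduction (the potentially multiplicative, not yet semistable case). The clean argument is the one via $c_4$: a minimal model over $K$ with multiplicative reduction has $v(c_4)=0$ and $v(\Delta)>0$; over $K'$ this model is still integral with $v'(c_4)=0$, and any further minimisation would send $c_4 \mapsto u^{-4} c_4$ with $v'(u)>0$, violating integrality. Hence the model is already minimal over $K'$ and the reduction is again multiplicative. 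This is what Silverman does, and it is a one-line fix to your sketch.
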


So if $E/K$ has additive reduction, then there is a finite extension over which it acquires either good or multiplicative reduction; once it does, its reduction type does not change after a finite extension. This suggests the terminology \emph{potentially good (resp. multiplicative) reduction}. In case of additive reduction, using the last statement, we can check easily whether the reduction is potentially good or not: we only need to compute the valuation of the $j$-invariant.

\begin{definition}
A Galois representation is \emph{unramified} if it is trivial when restricted to the inertia subgroup.
\end{definition}

We recall here the main content of the Criterion of N\'eron - Ogg - Shafarevich. For the full statement and the proof see \cite[VII \S 7 Theorem 7.1]{Silv1}.

\begin{theorem}\label{nos}
Let $E/K$ be an elliptic curve. The following are equivalent:
\begin{itemize}
\item $E$ has good reduction over $K$;
\item $\rho_{E,\ell}$ is unramified for some (all) primes $\ell \neq p$.
\end{itemize}
\end{theorem}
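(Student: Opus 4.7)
The plan is to prove the two implications separately, using rather different techniques for each direction.

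For the forward direction, suppose $E$ has good reduction. Then $E$ extends to a smooth proper elliptic scheme $\mathcal{E}/O_K$ whose special fiber $\tilde E/k$ is an elliptic curve. Since $\ell \neq p = \car(k)$, the isogeny $[\ell^n]:\mathcal{E}\to\mathcal{E}$ is étale, so its kernel is a finite étale group scheme over $O_K$. The reduction map therefore induces a $G_K$-equivariant bijection
\[
E[\ell^n](\overline K) \xrightarrow{\;\sim\;} \tilde E[\ell^n](\overline k),
\]
and the action on the right factors through $G_K/I_K \cong \Gal(\overline k/k)$. Passing to the inverse limit over $n$, we conclude that $I_K$ acts trivially on $T_\ell(E)$.

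The converse is subtler, and I would argue it in two steps. First I would rule out (potentially) multiplicative reduction. If $E$ has multiplicative reduction, the Tate uniformization gives a short exact sequence
\[
0 \longrightarrow \mu_{\ell^n} \longrightarrow E[\ell^n] \longrightarrow \Z/\ell^n\Z \longrightarrow 0
\]
whose extension class is governed by the Kummer character of the Tate parameter $q \in K^\times$; since $v(q) > 0$, this character is non-trivial on $I_K$, contradicting the hypothesis. The same argument applied over a finite extension $L/K$ excludes potentially multiplicative reduction. Combined with Theorem \ref{silvermanVII5}, this forces $E$ to have potentially good reduction. Second, I would invoke the Serre–Tate criterion in its strong form: for an elliptic curve with potentially good reduction, the smallest extension of $K$ over which $E$ acquires good reduction is precisely the fixed field of $\ker(\rho_{E,\ell}|_{I_K})$. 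Under our hypothesis that $\rho_{E,\ell}$ is unramified, this fixed field is $K$ itself, so $E$ has good reduction over $K$.

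The main obstacle is the converse, specifically the clean separation between the multiplicative and the additive-potentially-good cases: the Kummer-character argument is the standard way to handle the former, but one has to phrase it carefully at the level of inertia (rather than the whole Galois group) so that it genuinely contradicts unramifiedness of $\rho_{E,\ell}$. Once the converse is established, the equivalence between ``some $\ell\neq p$'' and ``all $\ell \neq p$'' is automatic, since the property of good reduction is intrinsic to $E$ and independent of the auxiliary prime.
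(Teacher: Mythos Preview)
The paper does not actually prove this theorem: it merely recalls the statement and refers the reader to \cite[VII \S 7 Theorem 7.1]{Silv1} for the proof. So there is no in-paper argument to compare against, and I will simply assess your sketch.

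Your forward direction is the standard one and is fine. The converse, however, has a circularity problem. In step two you invoke ``the Serre--Tate criterion in its strong form'': that for a curve with potentially good reduction, the minimal extension of good reduction is exactly the fixed field of $\ker(\rho_{E,\ell}|_{I_K})$. But that statement (Theorem~\ref{serretate} in the paper) is \emph{deduced from} the N\'eron--Ogg--Shafarevich criterion, not proved independently of it. Concretely, to show that $E$ has good reduction over the fixed field of $\ker(\rho_{E,\ell}|_{I_K})$ one needs precisely the implication ``$\rho_{E,\ell}$ unramified $\Rightarrow$ good reduction'' applied over that field. So as written your argument assumes what it is trying to prove.

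The genuine proof of the converse goes through the N\'eron model directly and does not separate the multiplicative and additive cases. If $I_K$ acts trivially on $T_\ell(E)$, then $E[\ell^n]\subset E(K^{nr})$ for all $n$. The reduction map $E(K^{nr})\to \mathcal{E}_s(\overline k)$ (where $\mathcal{E}$ is the N\'eron model) has pro-$p$ kernel, so it is injective on $\ell$-power torsion; hence the identity component $\mathcal{E}_s^0$ has $\ell^{2n}$ points killed by $\ell^n$ for all $n$ (up to a bounded component-group index). This rules out $\mathcal{E}_s^0\cong \mathbb{G}_a$ or $\mathbb{G}_m$ and forces $\mathcal{E}_s^0$ to be an elliptic curve, i.e.\ good reduction. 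Your Tate-curve argument for excluding (potentially) multiplicative reduction is correct but unnecessary once one argues this way.
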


So, if $E/K$ has good reduction, the Galois representation factors through the quotient of $G_K$ by $I_K$, which is generated by Frobenius.

\begin{center}
    \begin{tikzcd}
    G_K \arrow{rr}{\rho_{E,\ell}} \arrow{dr}&   & \Aut(T_\ell(E))\\
        &G_K/I_K \cong \langle \Frob \rangle \arrow{ur}&
    \end{tikzcd}
\end{center}

In this case the problem reduces to the study of the image of Frobenius. As it is shown in \cite[IV \S 2.3]{Serre}, it is always diagonalisable (at least in an algebraic closure of $\Q_\ell$), so it is enough to compute its eigenvalues to uniquely determine its action.

\begin{lemma}\label{charpoly}
The characteristic polynomial of $\rho_{E,\ell}(\Frob)$ is independent of $\ell$ (as explained in \cite[Theorem 2]{ST}) and given by
\begin{align*}
    F(T)=T^2-aT+q,
\end{align*}
where $q=|k|$ and $a=q+1-|\tilde{E}(k)|$, where by $\tilde{E}$ we denote the reduction of the curve $E$ to $k$.
\end{lemma}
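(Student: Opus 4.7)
The plan is to leverage the N\'eron--Ogg--Shafarevich criterion (Theorem \ref{nos}): since $E$ has good reduction, $\rho_{E,\ell}$ factors through $G_K/I_K \cong \langle \Frob \rangle$, so the characteristic polynomial of $\rho_{E,\ell}(\Frob)$ on $T_\ell(E)$ is well defined. I would then compute it by transferring the question to the reduced curve $\tilde{E}/k$, where it becomes the classical trace and determinant of the $q$-power Frobenius endomorphism.

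The key geometric step is to observe that, since $\ell \neq p$, the reduction map $E[\ell^n] \to \tilde{E}[\ell^n]$ is a $G_K$-equivariant isomorphism for every $n$, inducing an isomorphism $T_\ell(E) \cong T_\ell(\tilde{E})$ that intertwines the action of $\Frob \in G_K$ with the action of the $q$-power Frobenius endomorphism $\phi_q$ of $\tilde{E}$. Consequently, the characteristic polynomial of $\rho_{E,\ell}(\Frob)$ equals the characteristic polynomial of $\phi_q$ acting on $T_\ell(\tilde{E})$, and this is what I need to compute.

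To finish I would invoke the standard dictionary between isogenies and their action on Tate modules: for any isogeny $\alpha$ of $\tilde{E}$, the determinant of $\alpha$ on $T_\ell(\tilde{E})$ equals $\deg \alpha$. Applied to $\phi_q$, this yields $\det \rho_{E,\ell}(\Frob) = \deg \phi_q = q$. For the trace, the same dictionary gives
\begin{align*}
\deg(1-\phi_q) = 1 - \tr(\phi_q) + \deg(\phi_q),
\end{align*}
and since $1-\phi_q$ is separable with kernel equal to the set of $\phi_q$-fixed points $\tilde{E}(k)$, we also have $\deg(1-\phi_q) = |\tilde{E}(k)|$. Combining yields $\tr \rho_{E,\ell}(\Frob) = q + 1 - |\tilde{E}(k)| = a$, so the characteristic polynomial is $T^2 - aT + q$ as claimed. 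Since $a$ and $q$ depend only on $\tilde{E}/k$, independence of $\ell$ is automatic.

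The main obstacle is not a difficult calculation but rather the careful citation of the correct compatibility and functoriality results: the Galois-equivariant isomorphism $T_\ell(E) \cong T_\ell(\tilde{E})$ requires both good reduction and $\ell \neq p$, and the degree/trace formulas for isogenies on Tate modules, while classical, are precisely the inputs supplied by \cite[Theorem 2]{ST} referenced in the statement; in a fully written proof I would simply cite these rather than reprove them.
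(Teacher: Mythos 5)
Your proof is correct: the paper itself only cites \cite[IV \S 1.3]{Serre} for this lemma, and your argument (reduction induces a $G_K$-equivariant isomorphism $T_\ell(E)\cong T_\ell(\tilde E)$ under good reduction with $\ell\neq p$, then $\det=\deg$ on Tate modules applied to $\phi_q$ and to the separable isogeny $1-\phi_q$) is exactly the standard argument underlying that reference. No gaps; at most one could quibble that the $\det=\deg$ formula is classical (Weil/Silverman III.8.6) rather than the content of \cite[Theorem 2]{ST}, which is really about $\ell$-independence for abelian varieties.
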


\begin{proof}
See \cite[IV \S 1.3]{Serre}.
\end{proof}

\begin{eg}\label{keyexample}
Let $K$ be a local field with residue characteristic $3$ and let $k$ be its residue field. Let $E/K$ be an elliptic curve such that the reduction over $k$ is: $y^2=x^3-x$. In particular, $E$ has good reduction over $K$.

We want to compute the eigenvalues of $\rho_{E,\ell}(\Frob)$, for a prime $\ell \neq 3$, as elements of $\C$.

First we assume that $k = \F_3$. Then the reduced curve $\tilde E$ has the following $4$ points over $\F_3$: $\{ O, (0,0), (1,0), (2,0) \}$, therefore in the notation above $a=0$, $q=3$ and the roots of $F(T)$, hence the eigenvalues of $\rho_{E,\ell}(\Frob)$, are $\pm i \sqrt{3}$. 

If $[k : \F_3] =n \geq 1$, then $\rho_{E,\ell}(\Frob)$ acts as the $n$-th power of the linear operator described above, so its eigenvalues are $(\pm i \sqrt{3})^n$. In particular we have:
\begin{center}
\begin{tabular}{c|cc}
$n \equiv 0 \pmod 4$ & $+ 3^{n/2}$ & $+ 3^{n/2}$ \\ 
$n \equiv 2 \pmod 4$ & $- 3^{n/2}$ & $- 3^{n/2}$ \\ 
$n$ odd & $i^n \sqrt{3^n}$ & $-i^n \sqrt{3^n}$ \\ 
\end{tabular} 
\end{center}

\end{eg}

Now let us consider elliptic curves with additive, potentially good reduction. Our first step is to determine the image of the inertia subgroup. As a consequence of the Criterion of N\'eron-Ogg-Shafarevich, if the curve $E/K$ has potentially good reduction then the inertia subgroup $I_K$ acts through a finite quotient on $T_\ell (E)$ for some (all) $\ell \neq p$ prime. (See \cite[VII \S 7 Corollary 7.3]{Silv1}.)

Since we are only interested in the action of $I_K=\Gal(\overline K/K^{nr})$ and since the reduction types of $E$ over $K$ and $K^{nr}$ are the same, we may work simply on $K^{nr}$. Suppose $E$ has potentially good reduction (recall this is equivalent to the $j$-invariant of $E$ being integral) and let $L$ be the minimal finite extension of $K^{nr}$ over which $E$ acquires good reduction. Then it follows from Theorem \ref{nos} that for $\ell \neq p$ the Galois representation $\rho_{E,\ell}$ factors through the quotient $I_K/I_L$, which is isomorphic to $\Gal(L/K^{nr})$. Indeed, the subgroup $I_L$ is normal in $I_K$, so $L/K^{nr}$ is Galois. This group is also finite, and since we chose $L$ to be minimal, it injects into $\Aut(T_{\ell}(E))$.

\begin{center}
\begin{tikzcd}
\Gal(\overline K/K^{nr}) \arrow{rr}{\rho_{E,\ell}} \arrow{dr} && \Aut(T_\ell(E))\\
	&\Gal(L/K^{nr}) \arrow[hook]{ur}
\end{tikzcd}
\end{center}

Furthermore, there is an injection of $\Gal(L/K^{nr})$ into $\Aut(\tilde{E_L})$, where $\tilde{E_L}$ is the reduction of a minimal equation for $E$ over $L$. For more details see \cite[proof of Theorem 2]{ST}. In particular, the image of inertia does not depend on $\ell$, and we can restrict the set of possible inertia groups to
\begin{align*}
&C_2,C_3,C_4,C_6,\\
C_3 \rtimes &C_4 \text{ only when }p=3,\\
Q_8, SL_2&(\F_3)\text{ only when }p=2.
\end{align*}

This list comes from the following classification of the automorphisms of an elliptic curve defined over a field of characteristic $p$ (see \cite[III \S 10 proof of Theorem 10.1 and Appendix A Proposition 1.2(c), Exercise A.1]{Silv1}).

\begin{center}
\begin{tabular}{c|c|c|c}
	& $j \neq 0,1728$ & $j=1728$ & $j=0$\\
\hline
$p \neq 2,3$ & $C_2$ & $C_4$ & $C_6$\\
\hline
$p=3$ & $C_2$ & $C_3 \rtimes C_4$ & $C_3 \rtimes C_4$\\
\hline
$p=2$ & $C_2$ & $SL_2(\F_3)$& $SL_2(\F_3)$
\end{tabular}
\end{center}

Since these groups have all different orders, if we know the degree of the extension $L/K^{nr}$, we can uniquely determine the Galois group of this extension, hence the structure group of the image of inertia. From this moment on, we denote by $\GalL$ the group $\Gal(L/K^{nr})$. In \cite{kraus}, there are complete classification theorems that depend on the residue characteristic being $2$, $3$ or higher. In this work we focus only on the case $p=3$. The main lemma to compute the extension $L$ is the following.

\begin{lemma}\label{corp3}
With the notations as above, we have
\begin{align*}
L=K^{nr}(E[2],\Delta^{1/4}),
\end{align*}
where $\Delta^{1/4}$ is any fourth root of $\Delta$.
\end{lemma}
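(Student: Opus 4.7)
The plan is to prove the double inclusion $L = K^{nr}(E[2], \Delta^{1/4})$.

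For $K^{nr}(E[2], \Delta^{1/4}) \subseteq L$: the criterion of N\'eron--Ogg--Shafarevich (Theorem~\ref{nos}) applied to $E$ over $L$ forces $I_L$ to act trivially on $E[2]$, hence $E[2] \subset E(L)$ and $K^{nr}(E[2]) \subseteq L$. Over $L$ the curve admits a Weierstrass model with unit discriminant $\Delta^{\min} \in \mathcal{O}_L^\times$; the scaling factor $u \in L^\times$ relating this to $y^2 = f(x)$ satisfies $\Delta = u^{12} \Delta^{\min}$. Multiplying by a fourth root of unity, which lies in $K^{nr}$ since $\gcd(4,3)=1$, gives $\Delta^{1/4} = u^3 (\Delta^{\min})^{1/4}$, and $(\Delta^{\min})^{1/4} \in \mathcal{O}_L^\times$ by Hensel's lemma (the residue field of $L$ being algebraically closed and $4$ being coprime to $p$). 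Hence $\Delta^{1/4} \in L$.

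For the reverse inclusion, let $M := K^{nr}(E[2], \Delta^{1/4})$ and show that $E$ acquires good reduction over $M$. Over $M$, factor $f(x) = \prod_{i=1}^3 (x-\alpha_i)$ with $\alpha_i \in M$. The first key step is the assertion that all three pairwise valuations $v(\alpha_i - \alpha_j)$, $i \neq j$, agree. To see this, compare with a minimal Weierstrass model for $E$ over $L$ via a change of variables $x = u^2 X + r$, $y = u^3 Y$ with $u \in L^\times$, $r \in L$: the transformed $2$-torsion $x$-coordinates $(\alpha_i - r)/u^2$ lie in $\mathcal{O}_L$ with pairwise distinct reductions, which forces each $(\alpha_i - \alpha_j)/u^2$ to be a unit, so $v(\alpha_i - \alpha_j) = 2v(u) =: c$ is independent of the pair.

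Then from $\Delta = 16 \prod_{i<j}(\alpha_i - \alpha_j)^2$ and $v(16) = 0$ we have $v(\Delta) = 6c$, so $v(\Delta^{1/4}) = 3c/2$. Since $\Delta^{1/4} \in M$, the value $3c/2$ lies in $v(M^\times)$; combined with $c \in v(K^{nr}(E[2])^\times) \subseteq v(M^\times)$, this yields $c/2 = 3c/2 - c \in v(M^\times)$. Pick $u_0 \in M^\times$ with $v(u_0) = c/2$ and apply the substitution $x = u_0^2 X + \alpha_1$, $y = u_0^3 Y$, obtaining
\[
Y^2 = X(X - \gamma_2)(X - \gamma_3), \qquad \gamma_j := (\alpha_j - \alpha_1)/u_0^2,
\]
with $v(\gamma_j) = c - 2(c/2) = 0$ and $v(\gamma_2 - \gamma_3) = c - c = 0$. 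The reduced cubic has three distinct roots $0, \tilde{\gamma}_2, \tilde{\gamma}_3 \in \overline{k}$, so this is a good-reduction model of $E$ over $M$, giving $L \subseteq M$.

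The main obstacle is the equal-valuation assertion on the differences $\alpha_i - \alpha_j$: this is where potentially good reduction enters in an essential way, and it is what makes adjoining $\Delta^{1/4}$ (rather than, say, $\Delta^{1/12}$) suffice over a field of residue characteristic~$3$. The remaining steps reduce to basic manipulations of valuations and a single invocation of Hensel's lemma.
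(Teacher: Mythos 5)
Your proof is correct, but it takes a genuinely different route from the paper: the paper's entire proof of Lemma~\ref{corp3} is a one-line citation to the Corollary of Kraus's Lemma~3, so all the content is outsourced to \cite{kraus}. Your argument is self-contained and both halves check out. The inclusion $K^{nr}(E[2],\Delta^{1/4})\subseteq L$ follows as you say from Theorem~\ref{nos} (which forces $E[2]\subset E(L)$ since $2\neq p$) together with $\Delta=u^{12}\Delta^{\min}$ and Hensel's lemma for the fourth root of the unit $\Delta^{\min}$; the reverse inclusion correctly exhibits an explicit unit-discriminant model over $M=K^{nr}(E[2],\Delta^{1/4})$, the key input being that all three valuations $v(\alpha_i-\alpha_j)$ equal $2v(u)$, which you extract from the smoothness of the reduced minimal model over $L$, and then the value-group manipulation $c/2=3c/2-c\in v(M^\times)$ does the rest. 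Two small points to make explicit if you write this up: Hensel's lemma is invoked over $L$, a finite extension of $K^{nr}$, which is Henselian but not complete (harmless, but worth a word); and the general Weierstrass substitution relating $y^2=f(x)$ to a minimal model over $L$ also shifts $y$ by $su^2X+t$, so you should complete the square on the minimal model first (a $u=1$ substitution, preserving minimality and good reduction) to reduce to the form $x=u^2X+r$, $y=u^3Y$ that your comparison of $2$-torsion $x$-coordinates uses. What your approach buys is transparency about where the hypotheses enter: the only uses of the residue characteristic are that $2$ and $4$ are prime to $p$ and that $16$ is a unit, so your argument actually proves the statement for every odd $p$, whereas the paper's citation ties the lemma to Kraus's $p=3$ analysis; what it costs is length, since the paper gets the same statement in one line.
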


\begin{proof}
See the Corollary to \cite[Lemma 3]{kraus}.
\end{proof}

Using this result and Tate's Algorithm (see \cite[IV \S 9]{Silv2}), Kraus proves the following classification theorem.

\begin{theorem}\label{p3}
For $p=3$ we have the following classification of the possible images of inertia:
\begin{itemize}
\item if $E$ has type $I^*_0$, then $v(\Delta)=6$ and $\GalL \cong C_2$;
\item if $E$ has type $III$, then $v(\Delta)=3$ and $\GalL \cong C_4$;
\item if $E$ has type $III^*$, then $v(\Delta)=9$ and $\GalL \cong C_4$;
\item if $v(\Delta) \equiv 0 \pmod 4$, then $\GalL \cong C_3$;
\item if $v(\Delta) \equiv 2 \pmod 4$ and $E$ has type different from $I^*_0$, then $\GalL \cong C_6$;
\item if $v(\Delta)$ is odd and $E$ has type different from $III$ and $III^*$, then $\GalL \cong C_3 \rtimes C_4$.
\end{itemize}
\end{theorem}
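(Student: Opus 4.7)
My plan is to extend to the Galois closure $F'/K$, show that $E$ acquires good reduction there, and read off the factorisation $\rho = \chi\otimes\psi$ from the good-reduction representation on $G_{F'}$. First, using Lemma \ref{corp3} together with the equality $K(E[2]) = K(\text{roots of }f)$ for the model $y^2 = f(x)$, I get $L = K^{nr}\cdot F$; since $L/F$ is unramified, uniqueness of the N\'eron model shows $E/F$ (and hence $E/F'$) already has good reduction.

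Apply Theorem \ref{nos} to $E/F'$: the restriction $\rho|_{G_{F'}}$ is unramified, hence determined by $\rho(\Frob_{F'}) = \rho(\Frob_K)^{m}$, where $m$ is the residue degree of $F'/K$. Because $\tilde E$ has $j=0=1728$ in characteristic $3$, it is geometrically isomorphic to $y^2 = x^3-x$; by Example \ref{keyexample} its Frobenius eigenvalues over $\F_3$ are $\pm i\sqrt 3$, so those of $\rho(\Frob_{F'})$ are $(\pm i\sqrt 3)^{nm}$. A check of residue degrees shows $nm$ is always even: automatic when $n$ is even, and when $n$ is odd the Galois closure $F' = F(i)$ contains the unramified quadratic $K(i)/K$, forcing $m$ to be even. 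Hence the two eigenvalues coincide and $\rho(\Frob_{F'}) = \lambda\cdot I$ for some $\lambda$; with $\chi$ the unramified character sending $\Frob_K \mapsto i^n\sqrt{3^n}$ we have $\chi(\Frob_{F'}) = \lambda$, so $\psi := \chi^{-1}\otimes\rho$ is trivial on $G_{F'}$ and descends to a representation of $\Gal(F'/K)$.

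To identify $\psi$ with the prescribed character, restrict to the inertia subgroup of $\Gal(F'/K)$, which is canonically identified with $\GalL \cong C_3\rtimes C_4$; there $\psi$ equals $\rho|_{I_K}$ (as $\chi$ is unramified) and is faithful by hypothesis. Since $C_3\rtimes C_4$ has a unique faithful irreducible $2$-dimensional character, this reproduces the table in the even-$n$ case, where $F' = F$ and $\Gal(F'/K)\cong C_3\rtimes C_4$, completing that half of the theorem. For $n$ odd, $\Gal(F'/K)\cong C_3\rtimes D_4$ and the restriction of $\psi$ to the index-$2$ subgroup $\GalL$ is determined, but the values on the remaining classes are not.

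The main obstacle is this odd-$n$ case: $C_3\rtimes D_4$ admits two inequivalent faithful irreducible $2$-dimensional characters whose restrictions to $\GalL$ coincide, and inertia-side character theory cannot distinguish them. As the author indicates, the resolution is an explicit computation: I would pick a Frobenius lift $\sigma\in G_K$ whose image lies in one of the $6A$ or $6B$ classes, and compute $\tr\rho(\sigma)$ directly by tracking the action of $\sigma$ on $\Delta^{1/4}$ and a root of $f$ (the generators of $F'/K$), combined with Lemma \ref{charpoly} applied over an auxiliary extension. The resulting value $\pm i\sqrt 3$ fixes the assignment of $6A$ versus $6B$ and pins down the correct $\psi$.
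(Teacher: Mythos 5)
Your proposal does not address the statement at hand. Theorem \ref{p3} is Kraus's classification of the inertia image $\GalL$ in terms of the N\'eron--Kodaira type of $E$ and of $v(\Delta)$: that type $I^*_0$ forces $v(\Delta)=6$ and $\GalL\cong C_2$, that types $III$ and $III^*$ force $\GalL\cong C_4$, that $v(\Delta)\equiv 0 \pmod 4$ forces $\GalL\cong C_3$, and so on. What you have written is instead an argument for the factorisation $\rho=\chi\otimes\psi$, i.e.\ for Theorem \ref{mainthm}: nowhere do you mention the reduction types, the congruence class of $v(\Delta)$ modulo $4$, or Tate's algorithm, and none of the six bullet points is established. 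Indeed, your argument takes the conclusion of the last bullet point (inertia image isomorphic to $C_3\rtimes C_4$) as a hypothesis rather than deriving it from $v(\Delta)$ being odd.

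For the record, the paper itself does not reprove this theorem; it quotes it from Kraus \cite{kraus}, indicating only the strategy. A proof would go as follows: by Lemma \ref{corp3}, $L=K^{nr}(E[2],\Delta^{1/4})$, and since the candidate groups $C_2$, $C_3$, $C_4$, $C_6$, $C_3\rtimes C_4$ have pairwise distinct orders, $\GalL$ is determined by the degree $[L:K^{nr}]$. One then computes this degree case by case: the extension $K^{nr}(\Delta^{1/4})/K^{nr}$ is tame of degree $4/\gcd(4,v(\Delta))$, the $2$-torsion field supplies the (wild) factor of $3$ precisely when the defining cubic stays irreducible over $K^{nr}$, and Tate's algorithm links these data to the N\'eron type and to $v(\Delta)$. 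That computation, not the representation-theoretic material in your proposal, is what a proof of Theorem \ref{p3} requires; your text belongs to the proof of Theorem \ref{mainthm} instead.
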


We refer to \cite[Theorems 2 and 3]{kraus} for the classification theorem in the case $p=2$. It is worth mentioning a more general result, which can be found in \cite[\S 2 Corollary 3]{ST}.

\begin{theorem}\label{serretate}
We have $L=K^{nr}(E[m])$, where $m$ is any integer with $m \geq 3$ and $(p,m)=1$. The Galois group $\Gal(\overline K/L)$ is equal to $\ker (\rho_{E,\ell})$ for any $\ell \neq p$.
\end{theorem}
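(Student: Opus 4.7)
The plan is to establish the two assertions separately, using the N\'eron-Ogg-Shafarevich criterion (Theorem \ref{nos}), the Hasse bound on Frobenius eigenvalues (Lemma \ref{charpoly}), and a standard lifting lemma asserting that any finite subgroup of $\GL_2(\Z_\ell)$ whose elements are congruent to the identity modulo $\ell^a$ with $\ell^a \geq 3$ is trivial.

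\textbf{Step 1: $L = K^{nr}(E[m])$.} For $K^{nr}(E[m]) \subseteq L$, note that $E/L$ has good reduction and $L \supseteq K^{nr}$ forces $G_L = I_L$; Theorem \ref{nos} then gives that $G_L$ fixes $E[(\ell')^n]$ pointwise for every prime $\ell' \neq p$ and every $n$, so ranging over primes $\ell' \mid m$ yields $E[m] \subseteq L$. For the reverse inclusion, set $F = K^{nr}(E[m])$ and pick a prime $\ell \neq p$ such that the $\ell$-part $\ell^a$ of $m$ satisfies $\ell^a \geq 3$; this is always possible since $m \geq 3$ and $(p,m)=1$ (take any odd prime factor of $m$, or $\ell=2$ when $m$ is a power of $2$, in which case $m \geq 4$). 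The image $\rho_{E,\ell}(G_F)$ is a finite subgroup of $\GL_2(\Z_\ell)$ by potentially good reduction, and it consists of matrices congruent to $I$ modulo $\ell^a$ because $G_F$ fixes $E[\ell^a]$. The lifting lemma then forces $\rho_{E,\ell}(G_F) = 1$, and Theorem \ref{nos} shows $E/F$ has good reduction, so $L \subseteq F$.

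\textbf{Step 2: $\Gal(\overline K/L) = \ker(\rho_{E,\ell})$.} The inclusion $G_L \subseteq \ker(\rho_{E,\ell})$ is immediate from Step 1 since $\rho_{E,\ell}(G_L) = 1$. For the reverse inclusion, I first show $\ker(\rho_{E,\ell}) \subseteq I_K$: choose a finite extension $K'/K$ over which $E$ acquires good reduction; by Lemma \ref{charpoly} applied to $E/K'$, the eigenvalues of $\rho_{E,\ell}(\Frob_{K'})$ have absolute value $\sqrt{|k_{K'}|} > 1$, so they are not roots of unity, and $\rho_{E,\ell}(\Frob_{K'})$ has infinite order; combined with the finiteness of $\rho_{E,\ell}(I_{K'})$, this rules out any nontrivial closed subgroup of $G_K/I_K \cong \hat{\Z}$ lying in $\ker(\rho_{E,\ell})$, so indeed $\ker(\rho_{E,\ell}) \subseteq I_K$. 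Now the fixed field of $\ker(\rho_{E,\ell})$ is an extension of $K^{nr}$ over which $E$ has good reduction by Theorem \ref{nos}, and by minimality of $L$ it must contain $L$; hence $\ker(\rho_{E,\ell}) \subseteq G_L$.

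\textbf{Main obstacle.} The subtlest step is proving $\ker(\rho_{E,\ell}) \subseteq I_K$ in Step 2: while the Hasse bound from Lemma \ref{charpoly} immediately supplies infinite order for $\rho_{E,\ell}(\Frob_{K'})$, converting this into the statement that no nontrivial closed subgroup of $G_K/I_K \cong \hat{\Z}$ survives in the kernel requires a small topological argument on the profinite group $\hat{\Z}$ to exclude unexpected closed subgroups. The lifting lemma used in Step 1 is standard, proved via the binomial expansion of $(I + \ell^a A)^\ell$, which forces $A \equiv 0 \pmod{\ell}$ exactly when $\ell^a \geq 3$.
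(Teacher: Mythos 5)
The paper gives no proof of this theorem at all --- it is quoted verbatim (in fact, slightly misquoted, see below) from \cite[\S 2, Corollary 3]{ST} --- so your attempt is necessarily an independent reconstruction. Your Step 1 is correct and is essentially the Serre--Tate argument: the inclusion $K^{nr}(E[m])\subseteq L$ from Theorem \ref{nos} together with $G_L=I_L$, and the reverse inclusion from the finiteness of $\rho_{E,\ell}(I_K)$ plus the torsion-freeness of the congruence subgroup $1+\ell^a M_2(\Z_\ell)$ for $\ell^a\geq 3$ (which is exactly where the hypothesis $m\geq 3$ enters; for $m=2$ the element $-1$ shows the statement fails). No complaints there.

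Step 2, however, contains a genuine error, and it sits precisely at the point you flagged as the main obstacle: the inclusion $\ker(\rho_{E,\ell})\subseteq I_K$ is \emph{false}, so no topological argument on $\hat{\Z}$ will rescue it. Over a finite extension $K'$ with good reduction, $\rho_{E,\ell}|_{G_{K'}}$ factors through $G_{K'}/I_{K'}\cong\hat{\Z}$ as $\alpha\mapsto M^\alpha$ with $M=\rho_{E,\ell}(\Frob_{K'})$. Some power $M^N$ lies in the pro-$\ell$ group $1+\ell^2 M_2(\Z_\ell)$, so the closure of $\langle M\rangle$ is an extension of a finite group by $\Z_\ell$; consequently the continuous surjection $\hat{\Z}=\prod_{p'}\Z_{p'}\twoheadrightarrow\overline{\langle M\rangle}$ annihilates the nontrivial closed subgroup $N\prod_{p'\neq\ell}\Z_{p'}$, even though $M$ has infinite order. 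In other words, infinite order of a topological generator does not make a procyclic quotient of $\hat{\Z}$ faithful, and $\ker(\rho_{E,\ell})$ genuinely contains elements outside $I_K$. The statement as printed in the paper is an imprecise transcription: Serre and Tate assert that $\Gal(\overline K/L)$ equals the kernel of the homomorphism $I_K\to\Aut(T_\ell(E))$, i.e.\ of $\rho_{E,\ell}|_{I_K}$ --- as is forced anyway by $\Gal(\overline K/L)\subseteq I_K$. Once the statement is read this way, your Step 2 collapses to something you have already proved: $\rho_{E,\ell}(G_L)=1$ gives one inclusion, and the fixed field of $\ker(\rho_{E,\ell}|_{I_K})$ is a finite extension of $K^{nr}$ (by finiteness of $\rho_{E,\ell}(I_K)$) over which $E$ has good reduction by Theorem \ref{nos}, hence contains $L$ by minimality. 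The detour through Frobenius eigenvalues and the Hasse bound should be deleted.
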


\section{Curves over a $3$-adic field with non-cyclic inertia action}\label{3adic}

In the following, we keep the notation and assumptions from the previous sections. When $p \geq 5$, the extension $L/K^{nr}$ is tamely ramified. Hence, for all $\ell \neq p$ prime, the Galois representation $\rho_{E,\ell}$ can be fully described, with the approach in \cite{weilrep}. A similar approach also works when $p=3$ but the image of inertia is cyclic and tame. Here we consider the case when $p=3$ and $E$ has image of inertia isomorphic to $C_3 \rtimes C_4$, i.e. the last case in Theorem \ref{p3}, and we present an algorithm to describe completely the Galois action.

\subsection{Setup}\label{presentations}

Let $k$ be the residue field of $K$, and let $n=[k:\F_3]$. Then $n$ is even if and only if $K$ contains a primitive fourth root of unity, which we denote $\zeta_4$ (see \cite[XIV \S 3 Lemma 1]{SerreLocFields}). The Galois representation changes substantially in these two cases. Let us fix some notation. By Lemma \ref{corp3}, we have that the minimal field extension $L/K^{nr}$ where $E$ acquires good reduction is generated by $2$-torsion and a $4$-th root of the discriminant. Let $F$ be the field extension of $K$ generated by these elements. Note that since $\car (K) =0$, the curve $E$ can be written with a Weierstrass equation of the form $y^2=f(x)$, where $f$ is a monic polynomial of degree $3$. If $\alpha,\beta,\gamma$ are the roots of $f$ in $\overline K$, $\Delta$ the discriminant of $E$, then $F=K(\alpha,\beta,\gamma,\Delta^{1/4})$, for some choice of $\Delta^{1/4}$. Then the discriminant $\Delta_f$ of the defining polynomial $f$, which is given by
\begin{align*}
\Delta_f = (\beta-\alpha)^2 (\gamma - \beta)^2 (\alpha- \gamma)^2,
\end{align*}
differs from $\Delta$ only by a factor $16$; in particular we have that $\sqrt{\Delta_f}$ and therefore $\sqrt{\Delta}$ are in the field generated by $2$-torsion.
\begin{remark}
The extension $F/K$ is totally ramified of degree $12$. Indeed, since $L=FK^{nr}$, $L/F$ is unramified and $L/K^{nr}$ is totally ramified of degree $12$, we have that $F/K$ has a subextension which is totally ramified of degree $12$. On the other hand, we have $[F:K] \mid 12$, since the extension of $K$ generated by $2$-torsion has degree dividing $6$ and it contains a square root of the discriminant, so $F$ is at most a quadratic extension of it. Therefore the degree is equal to $12$ and the whole extension $F/K$ is totally ramified. Moreover, as it has good reduction over $L=FK^{nr}$, $E$ acquires good reduction over $F$.
\end{remark}
However $F/K$ is not necessarily Galois. In fact it is Galois, with Galois group isomorphic to $\GalL \cong C_3 \rtimes C_4$, if and only if $\zeta_4 \in K$, i.e. if $n$ is even. Otherwise, its Galois closure is $F(\zeta_4)$ and $\Gal(F(\zeta_4)/K)$ is isomorphic to the semidirect product $(C_3 \rtimes C_4) \rtimes C_2$. As follows from the classification in \cite{Tim}, this group is $ C_3 \rtimes D_4$. We will now fix a presentation for the group $\Gal(F(\zeta_4)/K)$ in the two cases and show that this group is isomorphic to $C_3 \rtimes C_4$ for $n$ even, $C_3 \rtimes D_4$ for $n$ odd. 

Suppose first that $n$ is even. With the notation used above, we define $\sigma$ and $\tau$ to be the generators of $\Gal(F/K)$ that act on $\alpha,\beta,\gamma$ and $\Delta^{1/4}$ as follows:
\begin{align*}
    \sigma:& \alpha \mapsto \beta, \quad \beta \mapsto \gamma, \quad \gamma \mapsto \alpha, \qquad \Delta^{1/4} \mapsto \Delta^{1/4};\\
    \tau:& \alpha \mapsto \alpha, \quad \beta \mapsto \gamma, \quad \gamma \mapsto \beta, \qquad \Delta^{1/4} \mapsto \zeta_4 \Delta^{1/4}.
\end{align*}

Then $\Gal(F/K)$ has the following presentation
\begin{align*}
\langle \sigma,\tau; \sigma^3 = \tau^4 = 1, \tau\sigma\tau^{-1} = \sigma^{-1}\rangle. 
\end{align*}

This group is isomorphic to $C_3 \rtimes C_4$, which is given by the presentation in \cite{Tim}, via the isomorphism from $C_3 \rtimes C_4$ to $\Gal(F/K)$ defined by $a \mapsto \sigma \tau^2$ and $b \mapsto \tau$; in fact it is easy to check that these elements satisfy $a^6=1, b^2=a^3, bab^{-1}=a^{-1} $.

Suppose now that $n$ is odd, i.e. $\zeta_4 \notin K$. Then the Galois closure of $F/K$ is given by $F(\zeta_4)$ and the subgroup generated by the elements $\sigma,\tau$ is the inertia subgroup of $\Gal(F(\zeta_4)/K)$. Furthermore, $\Gal(F(\zeta_4)/K)$ contains an extra unramified automorphism corresponding to the map $\zeta_4 \mapsto -\zeta_4$, which we call $\phi$. Therefore it is presented by the following relations:
\begin{align*}
\sigma^3=1;\;
\tau^4=1;\;
\phi^2=1;\;
    \sigma \phi= \phi \sigma;\;
    \phi \tau \phi = \tau^{-1};\;
    \tau \sigma \tau^{-1}= \sigma^2.
\end{align*}

Now $C_3 \rtimes D_4 = \langle a,b,c | a^3=b^4=c^2=1, bab^{-1} = cac =a^{-1}, cbc=b^{-1} \rangle$ (see \cite{Tim}). The map from $C_3 \rtimes D_4$ to $\Gal(F(\zeta_4)/K)$ given by $a \mapsto \sigma$, $b \mapsto \tau$ and $c \mapsto \tau \phi$ is an isomorphism.

We denote the conjugacy classes of the group obtained in the two cases as follows (for the sake of completeness, we will use both the notation of \cite{Tim} and the one introduced in this paper):
\begin{itemize}
\item if $n$ is even, the conjugacy classes of $\Gal(F/K) \cong C_3 \rtimes C_4$ are $1=[ e ], \ 2 = [\tau^2=b^2],\ 3=[\sigma=ab^2], \ 4A=[\tau=b], \ 4B = [\sigma \tau = ab^{-1}],\ 6=[\sigma \tau^2=a]$;
\item if $n$ is odd, the conjugacy classes of $\Gal(F(\zeta_4)/K) \cong C_3 \rtimes D_4$ are $1=[ e ], \ 2A = [\tau^2=b^2],\ 2B =[\phi=b^{-1}c], \ 2C =[\tau \phi=c],\ 3=[\sigma=a], \ 4=[\tau=b], \ 6A=[\sigma \phi=ab^{-1}c], \ 6B =[\sigma^2 \phi=a^2b^{-1}c],\ 6C=[\sigma \tau^2=ab^2]$.
\end{itemize}

We can now prove Theorem \ref{mainthm}.

\subsection{The case $n$ is even}\label{iink}
\begin{proof}[Proof of Theorem \ref{mainthm}]
Let us first consider the case $n$ even, i.e. $\zeta_4 \in K$. Then, if $F$ is as at the beginning of Section \ref{3adic}, we showed that $F/K$ is Galois with Galois group $C_3 \rtimes C_4$. But then $F^{nr}/K$ is the compositum of the Galois extensions $F/K$ and $K^{nr}/K$, which intersect in $K$ since $F/K$ is totally ramified and $K^{nr}/K$ is unramified. So $\Gal(F^{nr}/K) \cong \Gal(F/K) \times \Gal(K^{nr}/K)$. In particular the Frobenius element, that generates $\Gal(K^{nr}/K)$, commutes with every element of this group, therefore its image under $\rho$ can be represented as a scalar matrix. We computed in Example \ref{keyexample} the eigenvalues of the Frobenius element of $F$, which coincide and are equal to $(-3)^{n/2}$ for every even $n$. As $F/K$ is totally ramified, the image of the Frobenius element of $K$ under $\rho$ is conjugate, so it has the same eigenvalues.

Now define the following unramified character:
\begin{align*}
    \chi(\Frob)&=(-3)^{n/2};\\
    \chi\big|_{I_K}&=1.
\end{align*}

Then $\rho(\Frob)=\chi(\Frob) Id$ and there exists a representation $\psi$ such that $\rho=\chi \otimes \psi$. The representation $\psi$ is irreducible of dimension $2$, since $\rho$ is, it is trivial on Frobenius and coincides with $\rho$ on inertia; therefore it factors through $C_3 \rtimes C_4$ and, as a representation of this finite group, it is faithful. The group $C_3 \rtimes C_4$ has only one irreducible faithful $2$-dimensional representation (see \cite{Tim}), so the Galois representation is completely described by it. Namely the character of $\psi$ in this case is:

$$
\begin{array}{c|cccccc}
  \rm class&\rm1&\rm2&\rm3&\rm4A&\rm4B&\rm6\cr
  \rm size&1&1&2&3&3&2\cr
\hline
  \psi&2&-2&-1&0&0&1\cr
\end{array}
$$
as claimed. \end{proof}

\subsection{The case $n$ is odd}\label{inotink}

\begin{lemma}\label{twofields}
Let $F(\zeta_4)$ be the Galois closure of $F$. Then:
\begin{align*}
    F(\zeta_4)=K(\sqrt{\beta-\alpha}, \sqrt{\gamma-\beta}, \sqrt{\alpha-\gamma}, \sqrt{\alpha-\beta}, \sqrt{\beta-\gamma}, \sqrt{\gamma-\alpha}).
\end{align*}
\end{lemma}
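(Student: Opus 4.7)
My plan is to prove the equality by double inclusion; let $M$ denote the field on the right-hand side. The inclusion $F(\zeta_4) \subseteq M$ is the easy direction: from pairs of square roots I recover each difference $\beta-\alpha,\gamma-\beta,\alpha-\gamma \in M$, and combining these with $\alpha+\beta+\gamma \in K$ (the $x^2$-coefficient of $f$, up to sign) gives $\alpha,\beta,\gamma \in M$. The ratio $\sqrt{\alpha-\beta}/\sqrt{\beta-\alpha}$ is a square root of $-1$, so $\zeta_4 \in M$. Finally, $\sqrt{\alpha-\beta}\sqrt{\beta-\gamma}\sqrt{\gamma-\alpha}$ squares to $(\alpha-\beta)(\beta-\gamma)(\gamma-\alpha)$, which is a square root of $\Delta_f=16\Delta$; hence this triple product is a fourth root of $16\Delta$, differing from $\Delta^{1/4}$ by a unit in $K(\zeta_4)$, so $\Delta^{1/4}\in M$. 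Thus $F(\zeta_4) = K(\alpha,\beta,\gamma,\Delta^{1/4},\zeta_4) \subseteq M$.

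For the reverse inclusion $M\subseteq F(\zeta_4)$, the strategy is to show that $v_F(\beta-\alpha)$ is even, so that $F(\sqrt{\beta-\alpha})/F$ is unramified and hence contained in the unique unramified quadratic extension of $F$, which for $n$ odd equals $F(\zeta_4)=F\cdot K(\zeta_4)$. For the parity computation: by Theorem~\ref{p3} the hypothesis that the inertia image is $C_3\rtimes C_4$ forces $v_K(\Delta)$ to be odd, and since $p=3$ is coprime to $16$, also $v_K(\Delta_f)$ is odd. As recalled in Section~\ref{presentations}, $F/K$ is totally ramified of degree $12$, while $K(\alpha,\beta,\gamma)$ is the degree $6$ subfield (so totally ramified over $K$), and $F/K(\alpha,\beta,\gamma)$ is totally ramified of degree $2$. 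By Galois symmetry the three differences $\beta-\alpha,\gamma-\beta,\alpha-\gamma$ have equal valuation in $K(\alpha,\beta,\gamma)$; three times this common valuation equals $v_{K(\alpha,\beta,\gamma)}(\sqrt{\Delta_f})=3v_K(\Delta_f)$, so each individual valuation is odd in $K(\alpha,\beta,\gamma)$ and hence even in $F$.

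Writing $\beta-\alpha=\pi_F^{2k}u$ with $u\in O_F^\times$ gives $F(\sqrt{\beta-\alpha})=F(\sqrt u)$, which is either trivial (if $\bar u$ is a square in the residue field $k$, by Hensel) or the unique unramified quadratic extension of $F$; in either case $\sqrt{\beta-\alpha}\in F(\zeta_4)$. The same argument handles $\sqrt{\gamma-\beta}$ and $\sqrt{\alpha-\gamma}$, and the sign-flipped square roots are recovered by multiplying with $\zeta_4\in F(\zeta_4)$. The main obstacle is precisely this parity argument: a priori each $\sqrt{\beta-\alpha}$ could generate a ramified quadratic extension of $F$ and push $M$ strictly beyond $F(\zeta_4)$, and it is the oddness of $v_K(\Delta)$ imposed by the reduction-type hypothesis that prevents this.
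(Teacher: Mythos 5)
Your proof is correct, and while the inclusion $F(\zeta_4)\subseteq M$ is essentially the paper's argument (recover $\alpha,\beta,\gamma$ from the differences and the trace, get $\zeta_4$ as a ratio of square roots, and get $\Delta^{1/4}$ from the triple product), your reverse inclusion takes a genuinely different route. The paper works over $B=K(\alpha,\beta,\gamma)$ and argues softly: $F'/B$ is a compositum of quadratic extensions, hence abelian of exponent $2$, and since $|B^\times/(B^\times)^2|=4$ Kummer theory bounds any such extension by degree $4$; combined with the already-proved inclusion $F(\zeta_4)\subseteq F'$ and $[F(\zeta_4):B]=4$, equality follows. Your argument instead pins down each generator individually: the oddness of $v_K(\Delta)$ (forced by the $C_3\rtimes C_4$ hypothesis via Theorem \ref{p3}, and insensitive to the choice of model since discriminants of models differ by twelfth powers) plus the Galois-transitivity of $S_3$ on the pairs of roots gives $v_B(\beta-\alpha)$ odd, hence $v_F(\beta-\alpha)$ even, so each $F(\sqrt{\beta-\alpha})/F$ is unramified and lands in $F(\zeta_4)$, the unique unramified quadratic extension of $F$ when $n$ is odd. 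The paper's argument is shorter and needs no information about $v(\Delta)$; yours is more explicit and as a by-product recovers the fact (used just after the lemma in the paper, where it is deduced from the action of $\phi$) that each $\sqrt{\beta-\alpha}$ lies in an unramified extension of $F$. One immaterial slip: the relation is $\Delta=16\Delta_f$, not $\Delta_f=16\Delta$; either way the triple product of square roots differs from a fourth root of $\Delta$ by an element of $K(\zeta_4)^\times$, so the conclusion stands.
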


\begin{proof}
Let
\begin{align*}
    F'=K(\sqrt{\beta-\alpha}, \sqrt{\gamma-\beta}, \sqrt{\alpha-\gamma}, \sqrt{\alpha-\beta}, \sqrt{\beta-\gamma}, \sqrt{\gamma-\alpha}).
\end{align*}
First of all, we prove that $F(\zeta_4) \subseteq F'$. Indeed $\alpha, \beta, \gamma$ are clearly in $F'$; $\dfrac{\sqrt{\beta-\alpha}}{\sqrt{\alpha-\beta}}$ is a primitive fourth root of unity contained in $F'$; finally one possible choice for $\Delta^{1/4}$ is given by the product $2\sqrt{(\beta-\alpha)(\gamma-\beta)(\alpha-\gamma)}$, which is in $F'$. Since $F'$ contains this element and a primitive fourth root of unity, then also all the other fourth roots of $\Delta$ (which are in $F(\zeta_4)$) are in $F'$. To prove that $F' \subseteq F(\zeta_4)$, let $B=K(\alpha,\beta,\gamma)$; we show that $[F':B] \mid [F(\zeta_4):B]$. The extension $F(\zeta_4)/B$ is of degree $4$, with an unramified subextension of degree $2$ and a totally tamely ramified subextension of degree $2$. Therefore $\Gal(F(\zeta_4)/B) \cong C_2 \times C_2$. The extension $F'/B$ is the compositum of some quadratic extensions, so it is abelian of exponent $2$. By \cite[XIV \S 4 Exercise 3]{SerreLocFields}, we have $|B^\times/(B^\times)^2| = 4$, hence $B^\times/(B^\times)^2 \cong C_2 \times C_2$, and by Kummer theory, the abelian extensions of $B$ of exponent $2$ are in bijection with the subgroups of $B^\times/(B^\times)^2$, which are five, namely $B$, three quadratic extensions and the biquadratic; therefore $[F':B] \mid 4$.
\end{proof}

We now want to compute the action of $\phi$ on the generators of $F'$; for any choices of the square roots, we know that $\phi(\sqrt{\beta-\alpha})=\pm \sqrt{\beta-\alpha}$ and similarly
$\phi(\sqrt{\alpha-\beta})= \pm \sqrt{\alpha-\beta}$. On the other hand, $\phi$ changes the sign of $\dfrac{\sqrt{\alpha-\beta}}{\sqrt{\beta-\alpha}}$ for it is a primitive 
fourth root of unity. 
Therefore, we have either:
\begin{itemize}
\item $\phi(\sqrt{\beta-\alpha})=\sqrt{\beta-\alpha}$ and $\phi(\sqrt{\alpha-\beta})=-\sqrt{\alpha-\beta}$, or
\item $\phi(\sqrt{\beta-\alpha})=-\sqrt{\beta-\alpha}$ and $\phi(\sqrt{\alpha-\beta})=\sqrt{\alpha-\beta}$.
\end{itemize}
Without loss of generality the first condition holds, so $\sqrt{\beta-\alpha} \in F$. Similarly, using the relations between the generators of $\Gal(F'/K)$, we have that $\phi$ fixes $\sqrt{\gamma-\beta},\sqrt{\alpha-\gamma}$ and changes sign to the other generators of $F'$; therefore $F$, which is the subfield of $F'$ fixed by $\phi$, satisfies $$F=K(\sqrt{\beta-\alpha},\sqrt{\gamma-\beta},\sqrt{\alpha-\gamma}).$$

\begin{lemma}\label{reduction_wild}
Let $O_F$ be the ring of integers of $F$, with maximal ideal $\m_F$. Then with the same notation as above, we have
\begin{align*}
    \dfrac{\sigma(x)}{x} \equiv 1 \pmod {\m_F},
\end{align*}
for all $x \in O_F \setminus \{0\}$.
\end{lemma}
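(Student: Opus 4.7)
The plan is to reduce the statement to the claim that $\sigma$ lies in the first (lower numbering) ramification group of the totally ramified Galois extension $L/K^{nr}$, and then to identify that ramification group as the wild inertia via a standard Sylow argument.

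First, I would factor an arbitrary $x\in O_F\setminus\{0\}$ as $x=u\pi^{m}$ with $\pi$ a uniformizer of $F$, $u\in O_F^\times$, and $m=v_F(x)\geq 0$, so that
\[
\frac{\sigma(x)}{x}=\frac{\sigma(u)}{u}\cdot\left(\frac{\sigma(\pi)}{\pi}\right)^{m}.
\]
Because $F/K$ is totally ramified (as noted in Section~\ref{presentations}), the residue field of $F$ equals $k$, and the inertia element $\sigma$ acts trivially on $k$; hence $\sigma(u)\equiv u\pmod{\m_F}$, and dividing by the unit $u$ preserves the congruence, giving $\sigma(u)/u\equiv 1\pmod{\m_F}$. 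It therefore suffices to prove that $v_F(\sigma(\pi)-\pi)\geq 2$, i.e.\ that $\sigma(\pi)/\pi\equiv 1\pmod{\m_F}$.

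To establish this, I would pass to the extension $L=FK^{nr}$. Since $\zeta_4\in K^{nr}$, we have $L=F(\zeta_4)K^{nr}$, and $L/K^{nr}$ is totally ramified Galois of degree $12$ with group $G_0\cong C_3\rtimes C_4$; the element $\sigma$ belongs to $G_0$ via the identification $\Gal(L/K^{nr})\cong\Gal(F(\zeta_4)/K(\zeta_4))$, which is valid because $\sigma$ fixes both $K$ and $\zeta_4$. As $L/F$ is unramified, any uniformizer of $F$ is also a uniformizer of $L$ and $v_L$ restricts to $v_F$. Thus the required inequality is precisely the assertion that $\sigma\in G_1$, the first ramification group in lower numbering. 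By the standard theory (Serre, \emph{Local Fields}, IV.2), $G_1$ is a normal $p$-subgroup of $G_0$ with cyclic quotient of order prime to $p$, so $G_1$ is the unique Sylow $p$-subgroup of $G_0$. Here $p=3$ and $|G_0|=12$, so $G_1$ is the unique Sylow $3$-subgroup of $C_3\rtimes C_4$, namely $\langle\sigma\rangle$. Hence $\sigma\in G_1$, and the lemma follows.

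The main bookkeeping obstacle is ensuring the argument carries through in the case $n$ odd, when $F/K$ itself is not Galois; passing to $L/K^{nr}$, which is Galois regardless of the parity of $n$, sidesteps this cleanly, and I expect no further difficulty.
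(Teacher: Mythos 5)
Your proof is correct and follows essentially the same route as the paper's: factor $x$ as a unit times a power of a uniformiser, observe that $\sigma$ lies in the first ramification group $G_1$ (the wild inertia, identified via Serre, \emph{Local Fields}, IV.2), and apply the standard characterisation of $G_1$ in terms of $\sigma(\pi)/\pi$. The only cosmetic differences are that you handle the unit factor via triviality of the residue extension rather than the congruence $\sigma(x)\equiv x \pmod{\m_F^2}$, and you make explicit the Sylow argument that the paper leaves to a citation.
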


\begin{proof}
As $\sigma$ is in the wild inertia subgroup of $\Gal(F'/K)$, which is equal to the first ramification group by \cite[IV \S 2 Corollary 1 to Proposition 7]{SerreLocFields}, we have $\sigma(x) \equiv x \pmod {\m_F^2}$. If $x \in O_F^\times$ (i.e. if $x$ is a unit), then $\sigma(x)/x \equiv 1 \pmod {\m_F^2}$, hence modulo ${\m_F}$; if $x=\pi$ is a uniformiser of $O_F$ of $F$ then by \cite[IV \S 2 Proposition 5]{SerreLocFields} we have $\sigma(x)/x \equiv 1 \pmod{\m_F}$. In general $x=\pi^a u$ where $a$ is a non-negative integer and $u \in O_F^\times$, so $\sigma(x)/x = (\sigma(\pi)/\pi)^a  \sigma(u)/u \equiv 1 \pmod{\m_F}$. 
\end{proof}

\begin{lemma}\label{minmod}
Let $E$ be as before. Then the reduction of some minimal model for $E/F$ on the residue field is
\begin{align*}
    \tilde{E}/k : y^2=x^3-x.
\end{align*}
\end{lemma}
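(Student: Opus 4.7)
The plan is to exhibit a Legendre-form model of $E$ over $F$ and identify its reduction using the wild-inertia congruence of Lemma \ref{reduction_wild}.

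Starting from a Weierstrass equation $y^2 = f(x)$ for $E/K$ with $f$ monic and (after clearing denominators) integral, the roots $\alpha, \beta, \gamma$ lie in $O_F$. Since $F$ contains $\sqrt{\beta - \alpha}$ (as shown immediately after Lemma \ref{twofields}), the substitution
$$x = (\beta - \alpha) X + \alpha, \qquad y = (\beta - \alpha)^{3/2} Y$$
is defined over $F$ and converts the equation into the Legendre form
$$Y^2 = X(X - 1)(X - \lambda), \qquad \lambda = \frac{\gamma - \alpha}{\beta - \alpha}.$$

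Next, I would apply Lemma \ref{reduction_wild} to the wild-inertia element $\sigma$, which cyclically permutes $\alpha \mapsto \beta \mapsto \gamma \mapsto \alpha$ and hence restricts to an automorphism of $F$. Taking $x = \beta - \alpha \in O_F \setminus \{0\}$ and then $x = \gamma - \beta$ in the lemma yields
$$\frac{\gamma - \beta}{\beta - \alpha} \equiv 1, \qquad \frac{\alpha - \gamma}{\gamma - \beta} \equiv 1 \pmod{\m_F}.$$
Multiplying these congruences gives $(\alpha - \gamma)/(\beta - \alpha) \equiv 1 \pmod{\m_F}$, and so $\lambda \equiv -1 \pmod{\m_F}$.

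Finally, since $\lambda \equiv -1$ and $\lambda - 1 \equiv -2$ are both units in $O_F$ (the residue characteristic being $3$), the discriminant $16 \lambda^2 (\lambda - 1)^2$ of the Legendre model above is a unit in $O_F$. That model is therefore already a minimal integral model with good reduction, and its reduction modulo $\m_F$ is
$$\tilde E\colon y^2 = x(x - 1)(x + 1) = x^3 - x,$$
as required. The only delicate point is applying Lemma \ref{reduction_wild} to the non-unit $\beta - \alpha$, which is accommodated by the lemma's statement for arbitrary nonzero elements of $O_F$.
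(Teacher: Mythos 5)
Your proof is correct and follows essentially the same route as the paper: the same change of variables to Legendre form with $\lambda=(\gamma-\alpha)/(\beta-\alpha)$, and the same use of Lemma \ref{reduction_wild} to get $\lambda\equiv -1\pmod{\m_F}$ (you apply it twice and multiply, where the paper applies it to $\sigma^2$ directly). Your explicit check that the Legendre model's discriminant is a unit, hence that the model is minimal with good reduction, is a small welcome addition that the paper leaves implicit.
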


\begin{proof}
First note that we can write, over $F$, the equation for $E$ as follows:
\begin{align*}
    y^2 = (x-\alpha)(x-\beta)(x-\gamma).
\end{align*}

Now, operating the change of variables (well-defined over $F$)
\begin{align*}
    \left\lbrace \begin{array}{cl}
        x =&x'(\beta-\alpha)+\alpha  \\
        y =&y'(\sqrt{\beta-\alpha})^3
    \end{array}\right.
\end{align*}
we obtain the new equation
\begin{align*}
(y')^2 = x'(x'-1)(x'-\lambda),    
\end{align*}
where $\lambda = \dfrac{\gamma - \alpha}{\beta - \alpha}$. Finally, note that $\alpha - \gamma= \sigma^{2} (\beta-\alpha)$, and since $\sigma$ is an element of the wild inertia subgroup of $\Gal(F'/K)$ then by Lemma \ref{reduction_wild}, $\dfrac{\sigma^{2} (\beta-\alpha) }{ \beta - \alpha} \equiv 1 $ in the residue field. So the reduction in $k$ of $\lambda$ is the same as the reduction of $-\dfrac{\sigma^2(\beta-\alpha)}{\beta-\alpha}$, i.e. $-1$. With simplified notation, the reduction of $E$ in $k$ is therefore $y^2=x^3-x$.
\end{proof}

\begin{proof}[Proof of Theorem \ref{mainthm}]
In Example \ref{keyexample} we computed the eigenvalues of the Frobenius element of $F$, and hence of $K$ (as $F/K$ is totally ramified), acting on $E$, which are $\pm i^n \sqrt{3^n}$. In particular, since $n$ is odd, they are complex conjugate and the trace of Frobenius is $0$. 

Let $\chi$ be the following unramified character of $G_K$:
\begin{align*}
    \chi(\Frob)& =i^n \sqrt{3^n};\\
    \chi_{\big|{I_K}}& = 1.
\end{align*}

Then $\rho(\Frob)=\chi(\Frob)\mat{1}{0}{0}{-1}$ and $\rho(\Frob^2)=\chi(\Frob)^2 Id$. Let $F_2$ be the field extension of $K$ fixed by $\Frob^2$: then it is an unramified extension of $K$ of degree $2$, i.e. $F_2=K(\zeta_4)$. Also, in the notation used above, $F'=F(\zeta_4)= F F_2$. So the Galois group described before $\Gal(F(\zeta_4)/K)$ is generated by $\sigma,\tau$ and the class of $\Frob$ modulo $\Frob^2$, which we can identify with $\phi$. Moreover, there exists an irreducible representation $\psi$ of $G_K$ such that
\begin{align*}
    \rho = \chi \otimes \psi;
\end{align*}
to find it, since $\chi$ is a character, it is sufficient to consider $\psi(g)=\dfrac{1}{\chi(g)} \rho(g)$. The kernel of this representation $\psi$ is precisely $\Gal(\overline K /F(\zeta_4))$, so $\psi$ factors through the finite group $\Gal(F(\zeta_4)/K)$ and it is indeed an irreducible faithful representation of the finite group $C_3 \rtimes D_4$.

By looking at the character table of $C_3 \rtimes D_4$ (again, see \cite{Tim}) it follows that there are precisely two irreducible faithful representations of this group of dimension $2$, and they only differ for the character of the two conjugacy classes generated by the elements $\sigma \phi$ and $\sigma^{2} \phi$. To uniquely determine $\psi$ we therefore have to compute the trace of the element $\psi(\sigma \phi)$, and see whether it is $i\sqrt{3}$ or $-i\sqrt{3}$.

We know from Lemma \ref{minmod} that, over $F$, the equation for $E$ is
\begin{align*}
    E: y^2=(x-\alpha)(x-\beta)(x-\gamma)
\end{align*}
and under the change of variables
\begin{align*}
    x=x'(\beta-\alpha)+\alpha;\\
    y=y'(\sqrt{\beta-\alpha})^3,
\end{align*}
we find the minimal model
\begin{align*}
    E_{min}: y^2=x(x-1)(x-\lambda),
\end{align*}
that reduces to 
\begin{align*}
    \tilde E: y^2=x^3-x
\end{align*}
over the residue field. Now let $f(x,y)=(x',y')$ be the change of variables above, $\red$ the reduction map: $E_{min}(\overline K) \rightarrow \tilde{E} (\overline k)$ and $\lift: \tilde E (\overline k) \rightarrow E_{min}(\overline K)$ be any section of $\red$. Then we can compute the action of any Galois automorphism $g$ on the reduced curve $\tilde E (\overline k)$ via the following composition: 
\begin{align*}
    \red \circ f \circ g \circ f^{-1} \circ \lift.
\end{align*}

So in particular for $g=\sigma \Frob$ we have (recall $|k|=3^n$):
\begin{align*}
\left(\tilde{x},\tilde{y}\right) \xrightarrow{\lift}& \left(x,y\right) \xrightarrow{f^{-1}} \left(x(\beta-\alpha) +\alpha,y(\sqrt{\beta-\alpha})^3\right) \\
\xrightarrow{\sigma \Frob}& \left(\sigma(x)^{3^n} \sigma(\beta-\alpha) + \beta, \sigma(y)^{3^n}(\sigma(\sqrt{\beta-\alpha}))^3\right)\\
\xrightarrow{f}& \left(\dfrac{\sigma(x)^{3^n} \sigma(\beta-\alpha) + \beta-\alpha}{\beta-\alpha}, \sigma(y)^{3^n}\dfrac{(\sigma(\sqrt{\beta-\alpha}))^3}{(\sqrt{\beta-\alpha})^3}\right)\\
=&\left( \sigma(x)^{3^n} \dfrac{\sigma(\beta-\alpha)}{\beta-\alpha}+1, \sigma(y)^{3^n}\dfrac{(\sigma(\sqrt{\beta-\alpha}))^3}{(\sqrt{\beta-\alpha})^3}\right) \xrightarrow{\red} \left(\tilde{x}^{3^n}+1,\tilde{y}^{3^n}\right)
\end{align*}

Note that:
\begin{itemize}
    \item the reductions of $\dfrac{\sigma(\beta-\alpha)}{\beta-\alpha} $ and $\dfrac{\sigma(\sqrt{\beta-\alpha})}{\sqrt{\beta-\alpha}} $ are $1$ by Lemma \ref{reduction_wild};
    \item $\Frob$ fixes $F$, therefore it acts trivially on the elements $\beta-\alpha$ and $\sqrt{\beta-\alpha}$;
    \item since $\sigma$ is an inertia element, $\sigma(x) \equiv x$ and $\sigma(y) \equiv y$ in $\overline{k}$.
\end{itemize}    

    Now, to compute the trace of $\rho(\sigma \Frob)$ we use the formula
\begin{align*}
    \tr (\rho(g))=\deg(g) + 1 - \deg(1-g);
\end{align*}
in our case $\deg(g)=\det \rho(\sigma \Frob)=3^n$ and $\deg(1-g)$ is the number of points fixed by $\sigma \Frob$, i.e. the number of solutions (including the point at infinity) of
\begin{align}\label{sys}\left\lbrace
    \begin{array}{l}
         x=x^{3^n}+1  \\
         y=y^{3^n} \\
         y^2=x^3-x.
    \end{array}\right.
\end{align}

Let us first consider the case $n=1$. There are no solutions over $\overline k$ to this system of equations, therefore $\tr(\rho(\sigma \Frob))=3$. But then
\begin{align*}
    \tr(\psi(\sigma \phi))=\dfrac{1}{i\sqrt{3}} 3= -i\sqrt{3}.
\end{align*}

In general, we know that $\tr(\psi(\sigma \Frob))=\varepsilon i\sqrt{3}$ for some $\varepsilon \in \{\pm 1\}$, and $\tr (\rho(\sigma \Frob))=\varepsilon i \sqrt{3} \chi(\sigma \Frob)=\varepsilon i\sqrt{3} i^n \sqrt{3^n}=\varepsilon i^{n+1} \sqrt{3^{n+1}} = \varepsilon (-3)^{(n+1)/2}$. The value of $\varepsilon$ can be determined by solving the system of equations above, but for a general $n$, it cannot be solved directly. However, the number of solutions is independent on the curve we use, so it is sufficient to work with a fixed curve. Consider for example the elliptic curve over $\Q_3$
\begin{align*}
E: y^2 = x^3 +9.
\end{align*}

Since its reduction modulo $3$ is $y^2=x^3$, the valuation of the discriminant is $7$ and the $j$-invariant is $0$, this curve has potentially good reduction, and its N\'eron type is $IV$, so we are in the last case of Theorem \ref{p3}. Hence the image of inertia is isomorphic to $C_3 \rtimes C_4$.

Let us fix a basis for $\overline \Q_\ell^2$ (with $\overline \Q_\ell$ considered as embedded in $\C$), where the action of Frobenius is given by the matrix
\begin{align*}
    \rho(\Frob)= \mat{i\sqrt{3}}{0}{0}{-i\sqrt{3}};
\end{align*}
then the image of $\sigma$ is either $\mat{\zeta_3}{0}{0}{\zeta_3^{-1}}$ or its inverse, where $\zeta_3 \in \overline \Q_\ell$ is the primitive third root of unit $\dfrac{-1+i\sqrt{3}}{2}$. By the computation done for the case $n=1$, we know $\tr(\rho(\sigma \Frob))=3$ and a simple check shows that then $\rho(\sigma)=\mat{\zeta_3^{-1}}{0}{0}{\zeta_3}$. Now let $K$ be an unramified extension of $\Q_3$ of odd degree $n$, so the residue field $k$ is a degree $n$ extension of $\F_3$ and the reduction type and Galois representation of the curve $E$ base-changed to $K$, restricted to inertia is exactly the same as above. Then $\rho(\sigma \Frob)= \mat{\zeta_3^{-1}i^n\sqrt{3^n}}{0}{0}{-\zeta_3i^n\sqrt{3^n}}$, with trace $-(-3)^{(n+1)/2}$. Incidentally, this argument proves the following.
\begin{lemma}
The number of solutions of the system of equations \eqref{sys} above is $3^n + (-3)^{(n+1)/2}$.
\end{lemma}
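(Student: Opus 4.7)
The lemma is read off from the preceding trace computation, combined with the standard identity
\begin{align*}
\deg(1-\phi) = \deg(\phi) + 1 - \tr(\rho(\phi))
\end{align*}
for an endomorphism $\phi$ of an elliptic curve over an algebraically closed field. The plan is to apply this with $\phi = \sigma\Frob$ acting on $\tilde E_{\overline k}$ and to match both sides against a geometric fixed-point count.

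First I would identify $\deg(1-\sigma\Frob)$ with $N+1$. The action of $\sigma\Frob$ on $\tilde E(\overline k)$ was computed in the proof above to be $(x,y) \mapsto (x^{3^n}+1,\, y^{3^n})$, so its affine fixed points are exactly the $N$ solutions of \eqref{sys}; together with the always-fixed point at infinity, this gives $N+1$ geometric fixed points on the projective curve. Separability of $1-\sigma\Frob$ is immediate because the derivative of $\sigma\Frob$ vanishes (the Frobenius factor kills it) in residue characteristic $3$, so $1-\sigma\Frob$ is \'etale and $\deg(1-\sigma\Frob)$ equals the set-theoretic fixed-point count $N+1$.

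Next, since $\sigma$ is an automorphism of $\tilde E$, $\deg(\sigma\Frob) = \deg(\sigma)\deg(\Frob) = 1\cdot 3^n = 3^n$. Plugging in gives
\begin{align*}
\tr(\rho(\sigma\Frob)) = 3^n + 1 - (N+1) = 3^n - N.
\end{align*}
From the computation carried out just before the lemma, using the representation $\psi$ on the specific curve $y^2 = x^3 + 9$, one has $\tr(\rho(\sigma\Frob)) = -(-3)^{(n+1)/2}$; rearranging yields the claimed equality $N = 3^n + (-3)^{(n+1)/2}$.

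The only obstacle I anticipate is bookkeeping: one must check that the affine system \eqref{sys} really captures all the non-trivial fixed points (so that nothing is double-counted and the point at infinity is the only non-affine contribution), and that $1-\sigma\Frob$ is indeed separable; but both are standard for endomorphisms of elliptic curves in characteristic $p$, and no new ideas are required beyond the trace calculation already performed in the proof of Theorem \ref{mainthm}.
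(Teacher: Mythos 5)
Your proposal is correct and follows essentially the same route as the paper: both apply the identity $\tr(\rho(g))=\deg(g)+1-\deg(1-g)$ to $g=\sigma\Frob$, identify $\deg(1-\sigma\Frob)$ with the affine solution count of \eqref{sys} plus the point at infinity, and combine this with the value $\tr(\rho(\sigma\Frob))=-(-3)^{(n+1)/2}$ obtained from the explicit matrix computation on $y^2=x^3+9$. The extra remarks you make on separability of $1-\sigma\Frob$ and multiplicativity of degrees are justifications the paper leaves implicit, not a different argument.
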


\begin{proof}
We have $\tr (\rho(\sigma \Frob)) = -(-3)^{(n+1)/2}$. On the other hand, we know
\begin{align*}
    \tr (\rho(\sigma \Frob)) &= |k| + 1 - (1+ |\{\text{solutions to \eqref{sys}}\}|)=\\
    &=3^n - |\{\text{solutions to \eqref{sys}}\}|,
\end{align*}
so the number of solutions to \eqref{sys} is precisely $3^n - \tr(\rho(\sigma \Frob))=3^n +(-3)^{(n+1)/2}$.
\end{proof}

So with the notation above we have $\varepsilon=-1$, and the character of $\psi$ is the following:

$$
\begin{array}{c|ccccccccc}
  \rm class&\rm1&\rm2A&\rm2B&\rm2C&\rm3&\rm4&\rm6A&\rm6B&\rm6C\cr
  \rm size&1&1&2&6&2&6&2&2&2\cr
\hline
  \psi&2&-2&0&0&-1&0&-i\sqrt{3}&i\sqrt{3}&1\cr
\end{array}
$$
as claimed. In particular, in the proof we computed the character of an element of the class $6A$.
\end{proof}

\begin{remark}
Throughout this work, we have considered the Galois representation $\rho$ on the Tate module. It is common to consider instead the representation $\rho_{\acute{e}t}$ on the \'etale cohomology of the elliptic curve. The two representations are dual to each other (see \cite[Theorem 15.1]{corsil}), so for each $g \in G_K$, 
\begin{align*}
    \rho_{\acute{e}t} (g) = (\rho(g)^{-1})^t.
\end{align*}

In this case, it is a convention to consider the Geometric Frobenius Automorphism $\Frob^{-1}$ in place of the Arithmetic Frobenius which we used. Therefore, the eigenvalues of $\rho_{\acute{e}t}(\Frob^{-1})$ are exactly the same as the eigenvalues of the Arithmetic Frobenius under $\rho$. In conclusion, $\rho_{\acute{e}t}$ factors as:
\begin{align*}
    \rho_{\acute{e}t} = \chi_{\acute{e}t} \otimes \psi_{\acute{e}t},
\end{align*}
where
\begin{itemize}
    \item $\chi_{\acute{e}t}= \chi^{-1}$ (therefore $\chi_{\acute{e}t}(\Frob^{-1})=\chi(\Frob)$);
    \item $\psi_{\acute{e}t}$ is the only irreducible representation of $C_3 \rtimes D_4$ with the following character:
    $$
\begin{array}{c|rrrrrrrrr}
  \rm class&\rm1&\rm2A&\rm2B&\rm2C&\rm3&\rm4&\rm6A&\rm6B&\rm6C\cr
  \rm size&1&1&2&6&2&6&2&2&2\cr
\hline
  \psi_{\acute{e}t}&2&-2&0&0&-1&0&i \sqrt{3}&-i \sqrt{3}&1\cr
\end{array}
$$\end{itemize}

So $\psi$ and $\psi_{\acute{e}t}$ only differ for the trace of the conjugacy classes of $\sigma \phi$ and $\sigma^2 \phi$.

A function computing the Galois representation on the \'etale cohomology of an elliptic curve defined over a local field is implemented in MAGMA. The algorithm for the case presented here has recently been improved, using the results in Theorem \ref{mainthm}.
\end{remark}

\subsection*{Acknowledgements}
The author would like to thank her supervisor Tim Dokchitser for the useful conversations and corrections. The work was supported by EPSRC.

%%%%%%%%%%% To ease editing, use normal size for the references:

\normalsize

\end{document}